\newcommand{\del}{\delta}
\newcommand{\eps}{\varepsilon}
\newcommand{\gam}{\gamma}
\newcommand{\lam}{\lambda}
\newcommand{\F}{{\mathbb F}}
\newcommand{\Z}{{\mathbb Z}}
\newcommand{\oC}{{\overline C}}
\newcommand{\cK}{{\mathcal K}}
\newcommand{\cR}{{\mathcal R}}
\newcommand{\lfl}{\left\lfloor}
\newcommand{\rfl}{\right\rfloor}
\newcommand{\lcl}{\left\lceil}
\newcommand{\rcl}{\right\rceil}
\newcommand{\stm}{\setminus}
\newcommand{\seq}{\subseteq}
\newcommand{\est}{\varnothing}
\newcommand{\longc}{,\ldots,}
\newcommand{\longp}{+\dotsb+}
\DeclareMathOperator{\rk}{rk}
\newcommand{\sub}[1]{_{\substack{#1}}}
\newtheorem{lemma}{Lemma}
\newtheorem{theorem}{Theorem}
\newtheorem{corollary}{Corollary}
\newtheorem{proposition}{Proposition}
\newcommand{\refl}[1]{~\ref{l:#1}}
\newcommand{\reft}[1]{~\ref{t:#1}}
\newcommand{\refp}[1]{~\ref{p:#1}}
\newcommand{\refb}[1]{~\cite{b:#1}}
\newcommand{\refe}[1]{~\eqref{e:#1}}
\theoremstyle{remark}
\newtheorem*{remark}{Remark}
\title[Progression-free sets in $\Z_4^n$]%
  {Progression-free sets in $\Z_4^n$ \\ are exponentially small}
\author{Ernie Croot}
\email{ecroot@math.gatech.edu}
\address{School of Mathematics, Georgia Institute of Technology, Atlanta,
  Georgia 30332, USA}
\author{Vsevolod F. Lev}
\email{seva@math.haifa.ac.il}
\address{Department of Mathematics, The University of Haifa at Oranim,
  Tivon 36006, Israel}
\author[P\'eter P\'al Pach]{P\'eter P\'al Pach$^\dag$}
\thanks{${}^\dag$ Supported by the Hungarian Scientific Research Funds
  (Grant Nr.~OTKA~PD115978 and OTKA~K108947) and the J\'anos Bolyai Research
  Scholarship of the Hungarian Academy of Sciences.}
\email{ppp@cs.bme.hu}
\address{Department of Computer Science and Information Theory, Budapest
  University of Technology and Economics, 1117 Budapest, Magyar tud\'osok
  k\"or\'utja 2, Hungary}
\begin{document}
\baselineskip=16pt

\begin{abstract}
We show that for integer $n\ge 1$, any subset $A\seq \Z_4^n$ free of
three-term arithmetic progressions has size $|A|\le 4^{\gam n}$, with an
absolute constant $\gam\approx 0.926$.
\end{abstract}

\maketitle

\section{Background and Motivation}\label{s:intro}

In his influential papers \cite{b:r1,b:r2}, Roth has shown that if a set
$A\seq\{1,2\longc N\}$ does not contain three elements in an arithmetic
progression, then $|A|=o(N)$ and indeed, $|A|=O(N/\log\log N)$ as $N$ grows.
Since then, estimating the largest possible size of such a set has become one
of the central problems in additive combinatorics. Roth's original results
were improved by Heath-Brown~\refb{h}, Szemer\'edi~\refb{sz},
Bourgain~\refb{b}, Sanders~\cite{b:s2,b:s3}, and Bloom~\refb{bl}, the current
record due to Bloom being $|A|=O(N(\log\log N)^4/\log N)$.

It is easily seen that Roth's problem is essentially equivalent to estimating
the largest possible size of a subset of the cyclic group $\Z_N$, free of
three-term arithmetic progressions. This makes it natural to investigate
other finite abelian groups.

We say that a subset $A$ of an (additively written) abelian group $G$ is
\emph{progression-free} if there do not exist pairwise distinct $a,b,c\in A$
with $a+b=2c$, and we denote by $r_3(G)$ the largest size of a
progression-free subset $A\seq G$. For abelian groups $G$ of odd order, Brown
and Buhler \refb{bb} and independently Frankl, Graham, and R\"odl \refb{fgr}
proved that $r_3(G)=o(|G|)$ as $|G|$ grows. Meshulam~\refb{m}, following the
general lines of Roth's argument, has shown that if $G$ is an abelian group
of odd order, then $r_3(G)\le 2|G|/\rk(G)$ (where we use the standard
notation $\rk(G)$ for the rank of $G$); in particular, $r_3(\Z_m^n)\le
2m^n/n$. Despite many efforts, no further progress was made for over 15
years, till Bateman and Katz in their ground-breaking paper \refb{bk} proved
that $r_3(\Z_3^n)=O(3^n/n^{1+\eps})$ with an absolute constant $\eps>0$.

Abelian groups of even order were first considered in \refb{l1} where, as a
further elaboration on the Roth-Meshulam proof, it is shown that
$r_3(G)<2|G|/\rk(2G)$ for any finite abelian group $G$; here $2G=\{2g\colon
g\in G\}$. For the homocyclic groups of exponent $4$ this result was improved
by Sanders~\refb{s3} who proved that $r_3(\Z_4^n)=O(4^n/n(\log n)^\eps)$ with
an absolute constant $\eps>0$. The goal of this paper is to further improve
Sanders's result, as follows.

Let $H$ denote the binary entropy function; that is,
  $$ H(x) = -x\log_2x - (1-x)\log_2(1-x),\quad x\in(0,1), $$
where $\log_2x$ is the base-$2$ logarithm of $x$.
\begin{theorem}\label{t:main}
If $n\ge 1$ and $A\seq\Z_4^n$ is progression-free, then letting
  $$ \gam := \max \Big\{ \frac12 \big( H(0.5-\eps)+H(2\eps) \big)
                                  \colon 0<\eps<0.25 \Big\} \approx 0.926 $$
we have
  $$ |A| \le 4^{\gam n}. $$
\end{theorem}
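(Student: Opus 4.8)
The plan is to run the polynomial/linear-algebra method of Croot, Lev and Pach, the key $\Z_4$-specific device being the binary digit decomposition that linearises the doubling map. Write each $x\in\Z_4^n$ in digits, $x=u(x)+2v(x)$ with $u(x),v(x)\in\F_2^n$ the vectors of low and high bits. Since $2c$ depends only on $u(c)$, the relation $a+b=2c$ is, coordinatewise over $\F_2$, the conjunction $u(a)=u(b)$ and $v(a)+v(b)+u(a)=u(c)$; hence the indicator of $\{x+y=2z\}$ equals the $\F_2$-polynomial $T(x,y,z)=\prod_{i=1}^n(1+u(x)_i+u(y)_i)(1+v(x)_i+v(y)_i+u(x)_i+u(z)_i)$, which has degree at most $2$ in the coordinates of each of $x,y,z$ separately.

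The second step turns progression-freeness into a nonsingular matrix. If $A$ is progression-free and $a,b,c\in A$ satisfy $a+b=2c$, the three elements cannot be pairwise distinct, and a one-line case check forces $a=b$ (and then $u(a)=u(c)$). Fix $\lambda\colon A\to\F_2$ equal to $1$ on exactly one point of each class $A_v=\{a\in A:u(a)=v\}$ and $0$ elsewhere, and set $M_{a,b}=\sum_{c\in A}\lambda(c)\,T(a,b,c)$. For $a\neq b$ every term vanishes: if $u(a)\ne u(b)$ then $T(a,b,c)=0$ for all $c$, while if $u(a)=u(b)$ the only $c$ with $a+b=2c$ have $u(c)=u(a)+v(a)+v(b)$, and such a $c$ lying in $A$ would complete a genuine three-term progression. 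On the diagonal $M_{a,a}=\sum_{c\in A_{u(a)}}\lambda(c)=1$. So $M$ is the $|A|\times|A|$ identity over $\F_2$ and $\rk M=|A|$. Carrying out the sum over $c$ eliminates the $u(z)$-variables and yields $M_{a,b}=\prod_i(1+u(a)_i+u(b)_i)\cdot\mathbf 1_V\!\big(u(a)+v(a)+v(b)\big)$, where $V=u(A)\seq\F_2^n$ and $\mathbf 1_V$ is the multilinear $\F_2$-indicator of $V$, of degree $\le n$.

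The third step bounds $\rk M$ by a monomial count. Expand $M$ as a polynomial in the $2n$ digit-coordinates of $a$ and the $2n$ of $b$: every monomial is a product of a subset of $\{u(a)_i,v(a)_i\}$ and a subset of $\{u(b)_i,v(b)_i\}$, and the factored form of $M$ constrains which bidegrees occur. Here lies the difficulty: the naive two-variable splitting — separate monomials by whether the $a$-degree is below a threshold $t$, bound one part by the number of admissible low-$a$-degree $a$-parts and the other by the number of admissible $b$-parts — only gives the \emph{trivial} estimate $|A|\le 4^n$ (the partial binomial sums $\sum_{j\le t}\binom{2n}{j}$ and $\sum_{j<2n-t}\binom{2n}{j}$ always add up to $4^n$). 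To beat $4^n$ one must keep three quantities in play — the shared low digits, the high digits of $a$, the high digits of $b$ — exploiting that the factor $\prod_i(1+u(a)_i+u(b)_i)$ forces the $u(a)$- and $u(b)$-supports to be disjoint, so that spending $a$-degree on the high digits $v(a)$ severely restricts the remaining $b$-degree. This three-way (slice-rank style) bookkeeping, after optimising the threshold, produces a bound of the form $\sum_{p,q}\binom{n}{p}\binom{n}{q}$ over an admissible region for $(p,q)$, and the estimate $\sum_{k\le\alpha n}\binom nk=2^{(H(\alpha)+o(1))n}$ matches the optimum with $\max\{\tfrac12(H(0.5-\eps)+H(2\eps)):0<\eps<0.25\}=\gam$, with the two entropy terms accounting for the high-digit part of $a$ (size $\approx(\tfrac12-\eps)n$) and the combined digit budget that remains. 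Thus $|A|\le 4^{(\gam+o(1))n}$.

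The main obstacle is precisely this sharpening in the third step: since the straightforward linear-algebra bound is vacuous, everything hinges on getting the admissible bidegree region exactly right and showing that its entropy optimum is the stated $\gam$, rather than something weaker. Finally, to remove the $o(1)$ and any absolute constant and obtain $|A|\le 4^{\gam n}$ for \emph{every} $n\ge1$, one uses that $A^k$ is again progression-free in $\Z_4^{nk}$, so a bound $|A|\le c\cdot4^{\gam n}$ applied to $A^k$ gives $|A|\le c^{1/k}4^{\gam n}$ and then $|A|\le 4^{\gam n}$ on letting $k\to\infty$; the slack is available because $\gam<1$ and the maximum defining it is attained in the open interval.
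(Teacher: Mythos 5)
Your first two steps are sound: the digit decomposition $x=u(x)+2v(x)$, the $\F_2$-indicator polynomial $T$ for the relation $x+y=2z$, and the verification that progression-freeness makes $M_{a,b}=\sum_{c\in A}\lambda(c)T(a,b,c)$ the identity matrix, so that $\rk_{\F_2}M=|A|$, are all correct. But the proof stops exactly where the theorem begins: you never actually bound $\rk M$ by $4^{\gam n}$. You note yourself that the standard splitting of monomials by $a$-degree gives only the trivial bound $4^n$, and what you offer in its place is a description of a hoped-for ``three-way bookkeeping'': the admissible bidegree region is never written down, no argument is given that the expansion of $M$ respects it, and the claim that the resulting optimisation equals $\gam$ is simply asserted. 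The heuristic you lean on -- that $\prod_i(1+u(a)_i+u(b)_i)$ forces disjoint $u(a)$- and $u(b)$-supports in each monomial -- does not by itself control the degree distribution, because the other factor $\mathbf{1}_V\big(u(a)+v(a)+v(b)\big)$ re-injects $u(a)$-variables and may have degree as large as $n$; the interaction of the two factors is precisely what would need a proof. So the quantitative heart of the argument is missing, and it is not clear that any single rank inequality for this $4n$-variable kernel reproduces the stated exponent.

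For comparison, the paper does not bound the rank of a two-variable kernel at all. It partitions $\Z_4^n$ into cosets of the $2$-torsion subgroup $F_n\cong\Z_2^n$ and proves (Proposition~\ref{p:richcosets}) that for each $0<\eps<0.25$ fewer than $2^{nH(2\eps)}$ cosets contain at least $2^{nH(0.5-\eps)+1}$ points of $A$: otherwise a dimension count produces a nonzero multilinear polynomial of degree $n-\lcl 2\eps n\rcl$ vanishing off the set of doubled rich cosets, and Lemma~\ref{l:main} (applied inside each rich coset $R$, to $P(2r+x)$ on $2\cdot(A_R-r)$) forces it to vanish identically, a contradiction. Integrating the coset-size distribution over the threshold parameter $\eps$ and finishing with the tensor-power trick you describe then gives $|A|\le 4^{\gam n}$. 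The specific shape $\gam=\max\frac12\big(H(0.5-\eps)+H(2\eps)\big)$ arises from balancing the richness threshold against the number of rich cosets over a whole family of values of $\eps$ -- a balancing act your single-matrix sketch has no mechanism to reproduce without substantial new work.
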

The proof of Theorem~\reft{main} is presented in the next section.

We note that the exponential reduction in Theorem~\reft{main} is first of a
kind for problems of this sort.

Starting from Roth, the standard way to obtain quantitative estimates for
$r_3(G)$ involves a combination of the Fourier analysis and the density
increment technique; the only exception is \refb{l2} where for the groups
$G\cong\Z_q^n$ with a prime power $q$, the above-mentioned Meshulam's result
is recovered using a completely elementary argument. In contrast, in the
present paper we use the polynomial method, without resorting to the familiar
Fourier analysis -- density increment strategy.

For a finite abelian group $G\cong\Z_{m_1}\oplus\dotsb\oplus\Z_{m_k}$ with
positive integer $m_1\mid\dotsb\mid m_k$, denote by $\rk_4(G)$ the number of
indices $i\in[1,k]$ with $4\mid m_i$. Since, writing $n:=\rk_4(G)$, the group
$G$ is a union of $4^{-n}|G|$ cosets of a subgroup isomorphic to $\Z_4^n$, as
a direct consequence of Theorem~\reft{main} we get the following corollary.
\begin{corollary}
If $G$ is a finite abelian group then, writing $n:=\rk_4(G)$, we have
$r_3(G)\le 4^{-(1-\gam)n}|G|$, where $\gam\approx 0.926$ is the constant of
Theorem~\reft{main}.
\end{corollary}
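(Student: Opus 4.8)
The plan is to reduce the general group $G$ to the homocyclic case $\Z_4^n$ covered by Theorem~\reft{main}, by partitioning $G$ into translates of a suitable subgroup and applying the theorem on each translate. First I would fix the invariant factor decomposition $G\cong\Z_{m_1}\oplus\dotsb\oplus\Z_{m_k}$ with $m_1\mid\dotsb\mid m_k$, so that $n=\rk_4(G)$ is the number of indices $i$ with $4\mid m_i$; because of the divisibility chain, if $4\mid m_i$ then $4\mid m_j$ for all $j\ge i$, so these are exactly the last $n$ indices. In each such factor $\Z_{m_i}$ the element $m_i/4$ has order $4$ and hence generates a cyclic subgroup isomorphic to $\Z_4$; the (internal) direct sum of these $n$ subgroups is a subgroup $H\le G$ with $H\cong\Z_4^n$, so that $|H|=4^n$ and the index $[G:H]=4^{-n}|G|$.

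The key step is the observation that progression-freeness is inherited by the restriction to a coset after translation back into $H$. Concretely, for $g\in G$ consider the set $A_g:=\{h\in H\colon g+h\in A\}\seq H$, so that $A\cap(g+H)=g+A_g$ and $|A\cap(g+H)|=|A_g|$. If $a,b,c\in A_g$ were pairwise distinct with $a+b=2c$ (an identity computed in $H$, and hence equally valid in $G$ since $H$ is a subgroup), then $g+a,g+b,g+c$ would be pairwise distinct elements of $A$ satisfying $(g+a)+(g+b)=2g+(a+b)=2g+2c=2(g+c)$, contradicting that $A$ is progression-free. Thus each $A_g$ is a progression-free subset of $H\cong\Z_4^n$, and Theorem~\reft{main} yields $|A\cap(g+H)|=|A_g|\le 4^{\gam n}$.

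Finally I would sum over the $[G:H]=4^{-n}|G|$ cosets of $H$ (picking one representative $g$ per coset), which partition $G$:
$$ |A|=\sum_{g+H}|A\cap(g+H)|\le 4^{-n}|G|\cdot 4^{\gam n}=4^{-(1-\gam)n}|G|. $$
Since this bound holds for every progression-free $A\seq G$, taking the maximum over such $A$ gives $r_3(G)\le 4^{-(1-\gam)n}|G|$, as claimed.

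I do not expect a serious obstacle here; as the surrounding text indicates, the statement is a direct consequence of Theorem~\reft{main}. The only point requiring a little care is the first step -- exhibiting a genuine copy of $\Z_4^n$ inside $G$ -- which rests on the structure theorem together with the fact that $\rk_4(G)$ counts precisely the invariant factors divisible by $4$. Once $H$ is in place, the translation argument for progression-freeness and the coset count are entirely routine.
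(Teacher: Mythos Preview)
Your argument is correct and follows exactly the approach indicated in the paper: the paper's proof is the single sentence preceding the corollary, noting that $G$ is a union of $4^{-n}|G|$ cosets of a subgroup isomorphic to $\Z_4^n$ and that the result then follows directly from Theorem~\reft{main}. You have simply spelled out the details of that one-line reduction (exhibiting $H$ via the invariant-factor decomposition, checking that progression-freeness passes to translated coset slices, and summing), so there is nothing to correct or contrast.
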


\section{Proof of Theorem~\reft{main}}\label{s:proof}

The proof of Theorem~\reft{main} is based on the following lemma.

\begin{lemma}\label{l:main}
Suppose that $n\ge 1$ and $d\ge 0$ are integers, $P$ is a multilinear
polynomial in $n$ variables of total degree at most $d$ over a field
$\F$, and $A\seq\F^n$ is a set with $|A|>2\sum_{0\le i\le d/2}\binom ni$.
If $P(a-b)=0$ for all $a,b\in A$ with $a\ne b$, then also $P(0)=0$.
\end{lemma}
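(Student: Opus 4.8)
The plan is to recast the hypothesis as a rank bound on an $A\times A$ matrix, and then bound that rank using the degree restriction on $P$. Passing to a subset of $A$ of the appropriate size if necessary, we may assume $A$ is finite. Form the matrix $M\in\F^{A\times A}$ with entries $M_{a,b}:=P(a-b)$. The hypothesis says exactly that $M$ is diagonal with every diagonal entry equal to $P(0)$, that is, $M=P(0)\cdot I$; in particular $\rk M=|A|$ whenever $P(0)\ne 0$, since over a field a nonzero scalar matrix is invertible. It therefore suffices to prove the bound $\rk M\le 2\sum_{0\le i\le d/2}\binom ni$: this quantity is $<|A|$ by assumption, so it is incompatible with $\rk M=|A|$, and hence forces $P(0)=0$.

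To bound $\rk M$ I would expand $P(x-y)$ as a polynomial in the $2n$ variables $x_1\longc x_n,y_1\longc y_n$. Since $P$ is multilinear of total degree $\le d$, it is an $\F$-linear combination of squarefree monomials $\prod_{i\in S}x_i$ with $|S|\le d$; substituting $x_i\mapsto x_i-y_i$ and expanding $\prod_{i\in S}(x_i-y_i)=\sum_{T\seq S}(-1)^{|S\stm T|}\big(\prod_{i\in T}x_i\big)\big(\prod_{j\in S\stm T}y_j\big)$, we see that $P(x-y)$ is an $\F$-linear combination of products $\big(\prod_{i\in T}x_i\big)\big(\prod_{j\in U}y_j\big)$ with $T\cap U=\est$ and $|T|+|U|\le d$. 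The key observation is that $|T|+|U|\le d$ forces $\min\{|T|,|U|\}\le d/2$. Hence, collecting the terms with $|T|\le d/2$ according to their $x$-part and the remaining terms (which necessarily have $|U|\le d/2$) according to their $y$-part, one obtains a decomposition
$$ P(x-y)=\sum_{|T|\le d/2}\Big(\prod_{i\in T}x_i\Big)G_T(y)
        +\sum_{|U|\le d/2}H_U(x)\Big(\prod_{j\in U}y_j\Big), $$
the sums running over subsets $T,U\seq[1,n]$, with suitable polynomials $G_T,H_U$ over $\F$.

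Evaluating at $x=a$, $y=b$ with $a,b\in A$: for each fixed $T$ the $A\times A$ matrix $\big(\prod_{i\in T}a_i\cdot G_T(b)\big)_{a,b\in A}$ has rank at most $1$, likewise each $\big(H_U(a)\cdot\prod_{j\in U}b_j\big)_{a,b\in A}$, and $M$ is the sum of all of them. By subadditivity of rank,
$$ \rk M\le 2\cdot\#\{T\seq[1,n]\colon|T|\le d/2\}=2\sum_{0\le i\le d/2}\binom ni<|A|, $$
and since $\rk M=|A|$ would follow from $P(0)\ne 0$, we conclude $P(0)=0$.

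The conceptual heart of the argument --- and what looks hardest to come up with from scratch --- is the idea of passing to the matrix $\big(P(a-b)\big)_{a,b}$ and using that a multilinear $P$ of degree $\le d$ splits $P(x-y)$ into few ``low $x$-degree'' plus ``low $y$-degree'' rank-one pieces. Once that is in place, the only thing needing care is the bookkeeping in the displayed decomposition: that the two sums together account for every monomial of $P(x-y)$ (guaranteed by $\min\{|T|,|U|\}\le d/2$), and that collecting by a common sub-monomial is legitimate even though one pair $(T,U)$ may be produced by several monomials $\prod_{i\in S}x_i$ of $P$. The rest is a routine rank count.
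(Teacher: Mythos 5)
Your proof is correct and is essentially the paper's argument in matrix language: the decomposition of $P(x-y)$ into pieces with $x$-part or $y$-part of degree at most $d/2$ is exactly the paper's splitting, and your rank bound $\rk M\le 2\sum_{0\le i\le d/2}\binom ni$ via rank-one summands is the same counting that the paper phrases as a scalar product $P(a-b)=\langle u(a),v(b)\rangle$ with $u(a),v(b)\in\F^{2m}$ and linear independence of the vectors $u(a)$. No gaps; the finiteness reduction and the observation that $M=P(0)\cdot I$ forces full rank when $P(0)\ne 0$ are handled correctly.
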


\begin{proof}
%Without loss of generality we assume that $P$ is multilinear.
Let $m:=\sum_{0\le i\le d/2}\binom ni$, and let $\cK=\{K_1\longc K_m\}$
be the collection of all sets $K\seq[n]$ with $|K|\le d/2$. Writing for
brevity
  $$ x^I := \prod_{i\in I} x_i,\quad x=(x_1\longc x_n)\in\F^n,\ I\seq[n], $$
there exist coefficients $C_{I,J}\in\F\ (I,J\seq[n])$ depending only on the
polynomial $P$, such that for all $x,y\in\F^n$ we have
\begin{align*}\label{e:cIJ}
  P(x-y)
    &= \sum\sub{I,J\seq[n] \\ I\cap J=\est \\ |I|+|J|\le d}
                                                 C_{I,J}\, x^I y^J\notag \\
    &= \sum_{I\in\cK} x^I \sum\sub{J\seq[n]\stm I \\ |J|\le d-|I|}
                                                              C_{I,J}\, y^J
       + \sum_{J\in\cK} \Bigg( \sum\sub{I\seq[n]\stm J \\ d/2<|I|\le d-|J|}
                                                  C_{I,J}\, x^I \Bigg) y^J.
\end{align*}
The right-hand side can be interpreted as the scalar product of the
vectors $u(x),v(y)\in\F^{2m}$ defined by
\vskip -0.25in
\begin{gather*}
  u_i(x)     = x^{K_i}, \quad
  u_{m+i}(x) = \sum\sub{I\seq[n]\stm K_i \\ d/2<|I|\le d-|K_i|}
                           C_{I,K_i}\, x^I
  \intertext{and}
  v_i(y) = \sum\sub{J\seq[n]\stm K_i \\ |J|\le d-|K_i|} C_{K_i,J}\, y^J,
  \quad v_{m+i}(y) = y^{K_i}
\end{gather*}
for all $1\le i\le m$. Consequently, if we had $P(a-b)=0$ for all
 $a,b\in A$ with $a\ne b$, while $P(0)\ne 0$, this would imply that the
vectors $u(a)$ and $v(b)$ are orthogonal if and only if $a\ne b$. As a
result, the vectors $u(a)$ would be linearly independent (an equality of the
sort $\sum_{a\in A}\lam_au(a)=0$ with the coefficients $\lam_a\in\F$ after a
scalar multiplication by $v(b)$ yields $\lam_b=0$, for any $b\in A$).
Finally, the linear independence of
 $\{u(a)\colon a\in A\}\seq\F^{2m}$ implies $|A|\le 2m$, contrary to
the assumptions of the lemma.
\end{proof}

\begin{remark}
It is easy to extend the lemma relaxing the multilinearity assumption to the
assumption that $P$ has bounded degree in each individual variable.
Specifically, denoting by $f_\del(n,d)$ the number of monomials
$x_1^{i_1}\dotsc x_n^{i_n}$ with $0\le i_1\longc i_n\le\del$ and
 $i_1\longp i_n\le d$, if $P$ has all individual degrees not exceeding $\del$,
and the total degree not exceeding $d$, then $|A|>2f_\del(n,\lfl d/2\rfl)$
along with $P(a-b)=0\ (a,b\in A,\ a\ne b)$ imply $P(0)=0$. Moreover, taking
$\del=d$, or $\del=|\F|-1$ for $\F$ finite, one can drop the individual
degree assumption altogether.
\end{remark}

We will use the estimate
\begin{equation}\label{e:binom}
  \sum_{0\le i\le z} \binom ni < 2^{nH(z/n)}
\end{equation}
valid for all integer $n\ge 1$ and real $0<z\le n/2$.

Recall, that for integer $n\ge d\ge 0$, the sum $\sum_{i=0}^d\binom ni$ is
the dimension of the vector space of all multilinear polynomials in $n$
variables of total degree at most $d$ over the two-element field $\F_2$. In
particular, the dimension of the vector space of \emph{all} multilinear
polynomials in $n$ variables over $\F_2$ is equal to the dimension of the
vector space of all $\F_2$-valued functions on $\F_2^n$, and it follows that
any non-zero multilinear polynomial represents a non-zero function. These
basic facts are used in the proof of Proposition~\refp{richcosets} below.

For integer $n\ge 1$, denote by $F_n$ the subgroup of the group $\Z_4^n$
generated by its involutions; thus, $F_n$ is both the image and the kernel of
the doubling endomorphism of $\Z_4^n$ defined by $g\mapsto 2g\ (g\in\Z_4^n)$,
and we have $F_n\cong\Z_2^n$.

\begin{proposition}\label{p:richcosets}
Suppose that $n\ge 1$ and $A\seq\Z_4^n$ is progression-free. Then for every
$0<\eps<0.25$, the number of $F_n$-cosets containing at least
$2^{nH(0.5-\eps)+1}$ elements of $A$ is less than $2^{nH(2\eps)}$.
\end{proposition}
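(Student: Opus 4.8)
The plan is to reduce the proposition to an application of Lemma~\refl{main} over the two-element field $\F_2$. The key observation is that elements of $\Z_4^n$ lying in a fixed $F_n$-coset differ by an element of $F_n\cong\Z_2^n$, so we can identify the "within-coset" data with points of $\F_2^n$. Suppose, for contradiction, that for some $0<\eps<0.25$ there are at least $2^{nH(2\eps)}$ cosets each meeting $A$ in at least $2^{nH(0.5-\eps)+1}$ points. Using the progression-free hypothesis for $A\seq\Z_4^n$, I would extract a multilinear polynomial $P$ over $\F_2$, of total degree at most $d:=\lfl 2(0.5-\eps)n\rfl$ (chosen so that $\sum_{0\le i\le d/2}\binom ni<2^{nH(0.5-\eps)}$ via~\refe{binom}), which vanishes on all nonzero differences within one rich coset but does not vanish at $0$. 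Then Lemma~\refl{main} applied to the set $A\cap(\text{that coset})$, viewed in $\F_2^n$, forces $P(0)=0$, the contradiction.

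**Constructing the polynomial.**

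The heart of the argument is producing such a $P$ from the collision of many rich cosets. Here is the mechanism I expect: pick one point $c_t$ in each rich coset $t$ (say $t$ ranges over an index set $T$ with $|T|\ge 2^{nH(2\eps)}$), so that each $a\in A$ in coset $t$ can be written $a=c_t+x$ with $x\in F_n$, and $2a=2c_t$ depends only on $t$. Because $A$ is progression-free, for distinct $a,b\in A$ the sum $a+b$ is never $2c$ for a third point $c\in A$; I would exploit this by looking at, for each pair of rich cosets, which "half-sums" are blocked. The combinatorial input is that $2^{nH(2\eps)}$ cosets is too many: by a counting/pigeonhole argument on the doubled elements $2c_t\in F_n$ (there are $2^n$ of them) together with the structure of $F_n$, one locates a single coset, or a controlled configuration, in which the set of admissible within-coset differences is confined to the zero set of a low-degree multilinear polynomial. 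The cleanest route is probably: consider the indicator of $A$ restricted to a rich coset as a function $F_n\to\F_2$, hence a multilinear polynomial; the progression-free condition across pairs of cosets whose "centers" collide appropriately produces a polynomial identity of bounded degree that this indicator must satisfy on differences, and richness ($>2^{nH(0.5-\eps)+1}$ points) exceeds the $2m$ threshold of the Lemma.

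**The main obstacle.**

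The step I expect to be genuinely delicate is the passage from "many rich cosets" to "a low-degree polynomial vanishing on within-coset differences but not at $0$" — that is, correctly packaging the progression-free condition $a+b=2c$ (a ternary constraint linking possibly three different cosets) into a binary statement about differences in a single coset to which Lemma~\refl{main} applies. One must be careful that the doubling map kills exactly $F_n$, so $2a=2c$ precisely when $a,c$ lie in the same coset; thus $a+b=2c$ with $a,c$ in the same coset forces $b$ in that coset too, and within the coset the relation becomes $x+y=2z$ in $\F_2^n$, i.e. $x=y$ — degenerate. So the real content is when $a,b,c$ are spread across cosets, and the two numerical thresholds $2^{nH(0.5-\eps)+1}$ and $2^{nH(2\eps)}$ must be balanced so that either the number of cosets is small or some coset is forced to violate the Lemma. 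Getting the degree bookkeeping to line up — that $d\approx 2(0.5-\eps)n$ makes $2\sum_{i\le d/2}\binom ni$ just below the richness threshold while $2^{nH(2\eps)}$ counts the cosets — is where the precise shape of $\gam$ in Theorem~\reft{main} comes from, and that optimization-driven matching of exponents is the crux I would focus on verifying carefully.
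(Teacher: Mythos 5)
Your proposal gets the outer frame right (work per rich coset inside $F_n\cong\F_2^n$, take $d=\lfl(1-2\eps)n\rfl$ so that by\refe{binom} the threshold $2^{nH(0.5-\eps)+1}$ beats $2\sum_{0\le i\le d/2}\binom ni$, then invoke Lemma\refl{main}), and you correctly observe that within a single coset the relation $a+b=2c$ degenerates, so the content is cross-coset. But the heart of the proof --- actually producing the polynomial --- is missing, and this is exactly the step you yourself flag as unresolved. The paper's mechanism is not a pigeonhole on collisions of the doubled representatives (those doubles are automatically pairwise distinct, one per coset, so there are no collisions to exploit), nor is it the indicator of $A_R$ written as a multilinear polynomial (that gives no degree control whatsoever). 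Instead one sets $C:=\{2r\colon r\in R,\ R\in\cR\}\seq F_n$, so $|C|=|\cR|$, and $B:=\cup_{R\in\cR}\{a'+a''\colon a',a''\in A\cap R,\ a'\ne a''\}\seq F_n$; progression-freeness gives $B\cap C=\est$ (if $a'+a''=2r$ with $r\in R\in\cR$, then any $a\in A\cap R$ satisfies $a'+a''=2a$, a forbidden progression). Now the assumed lower bound $|\cR|\ge 2^{nH(2\eps)}$ is used through a \emph{dimension count on the complement}: $|\oC|=2^n-|C|<\sum_{i=0}^d\binom ni$, so some nonzero multilinear $P$ of degree at most $d$ vanishes on all of $\oC$, in particular on $B$.

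Note also that the contradiction is structured differently from what you propose: you want a $P$ that provably does \emph{not} vanish at $0$ (after translating a coset), which your sketch gives no way to certify. The paper never certifies nonvanishing at a point; rather, for each $R\in\cR$ and fixed $r\in R$, the polynomial $P(2r+x)$ vanishes on the pair-sums $2\cdot(A_R-r)$, so Lemma\refl{main} (applicable since $|A_R|>2\sum_{0\le i\le d/2}\binom ni$) forces $P(2r)=0$; thus $P$ vanishes on $C$ as well as on $\oC$, hence on all of $\F_2^n$, hence $P$ is the zero polynomial --- contradicting its choice. Without the set $C$, the disjointness $B\cap C=\est$, and the dimension count giving a low-degree $P$ vanishing on $\oC$, the two numerical thresholds $2^{nH(0.5-\eps)+1}$ and $2^{nH(2\eps)}$ never interact, and your outline cannot be closed as stated.
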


\begin{proof}
Let $\cR$ be the set of all those $F_n$-cosets containing at least
$2^{nH(0.5-\eps)+1}$ elements of $A$, and for each coset $R\in\cR$ let
$A_R:=A\cap R$; thus, $\cup_{R\in\cR} A_R \seq A$ (where the union is
disjoint), and
\begin{equation}\label{e:Aslarge}
  |A_R| \ge 2^{nH(0.5-\eps)+1},\quad R\in\cR.
\end{equation}

For a subset $S\seq\Z_4^n$, write
  $$ 2\cdot S:=\{s'+s''\colon (s',s'')\in S\times S,\ s'\ne s'' \}
       \quad\text{and}\quad 2\ast S:=\{2s\colon s\in S \}. $$
The assumption that $A$ is progression-free implies that the sets
  $$ B := \cup_{R\in\cR}(2\cdot A_R)\seq F_n\quad\text{and}\quad
                                        C:=\cup_{R\in\cR}(2\ast R)\seq F_n $$
are disjoint: this follows by observing that if $2r\in 2\cdot A_R$ with some
$r\in R$, then for each $a\in r+F_n$ we have $2a=2r\in 2\cdot A_R\seq 2\cdot
A$. Furthermore, the sets $2\ast R$ are in fact pairwise distinct singletons
(for $2r_1=2r_2$ is equivalent to $r_1-r_2\in F_n$ and thus to
$r_1+F_n=r_2+F_n$), whence $|C|=|\cR|$.

Let $d=n-\lcl 2\eps n\rcl$ so that, in view of \refe{Aslarge} and
\refe{binom},
\begin{equation}\label{e:was1}
  2\sum_{0\le i\le d/2} \binom ni < 2^{nH(0.5-\eps)+1} \le |A_R|, \quad R\in\cR.
\end{equation}
Denoting by $\overline C$ the complement of $C$ in $F_n$, and assuming that
$|\cR|\ge 2^{nH(2\eps)}$ (contrary to what we want to prove),
from~\refe{binom} we get
  $$ \sum_{i=0}^d \binom ni = 2^n - \sum_{i=0}^{\lcl 2\eps n\rcl-1} \binom ni
      > 2^n - 2^{nH(2\eps)} \ge 2^n - |\cR| = 2^n - |C| = |\oC|. $$
Consequently, identifying $F_n$ with the additive group of the vector space
$\F_2^n$, and accordingly considering $B$ and $C$ as subsets of $\F_2^n$, we
conclude that the dimension of the vector space of all multilinear
$n$-variate polynomials over the field $\F_2$ exceeds the dimension of the
vector space of all $\F_2$-valued functions on $\oC$. Thus, the evaluation
map, associating with every polynomial the corresponding function, is
degenerate. As a result, there exists a non-zero multilinear polynomial
$P\in\F_2[x_1\longc x_n]$ of total degree $\deg P\le d$ such that $P$
vanishes on $\overline C$. In particular, $P$ vanishes on $B\seq\overline C$,
and therefore on each set $2\cdot A_R$, for all $R\in\cR$. Fixing arbitrarily
an element $r\in R$, the polynomial $P(2r+x)$ thus vanishes whenever $x\in
2\cdot(A_R-r)$. Hence, also $P(2r)=0$ by Lemma~\refl{main} (which is
applicable in view of \refe{was1}); that is, $P$ also vanishes on each
singleton set $2\ast A_R$, for all $R\in\cR$. It follows that $P$ vanishes on
$C$. However, $P$ was chosen to vanish on $\overline C$. Therefore, $P$
vanishes on all of $\F_2^n$, and it follows that $P$ is the zero polynomial.
This is a contradiction showing that $|\cR|<2^{nH(2\eps)}$, and thus
completing the proof.
\end{proof}

\begin{proof}[Proof of Theorem~\reft{main}]
For $x\ge 0$, let $N(x)$ denote the number of $F_n$-cosets containing at
least $x$ elements of $A$; thus $N(x)=0$ for $x>2^n$, and we can write
\begin{equation}\label{e:int1}
  |A| = \int_0^{2^{n+1}} N(x)\,dx.
\end{equation}
Trivially, we have $N(x)\le2^n$ for all $x\ge 0$, so that
\begin{equation}\label{e:int2}
  \int_0^{2^{nH(1/4)+1}} N(x)\,dx \le 2^{(H(1/4)+1)n+1} < 2\cdot 4^{\gam n}.
\end{equation}
On the other hand, the substitution $x=2^{nH(0.5-\eps)+1}$ gives
\begin{equation}\label{e:int3}
  \int_{2^{nH(1/4)+1}}^{2^{n+1}} N(x)\,dx
      = n \int_0^{1/4} 2^{nH(0.5-\eps)+1} N(2^{nH(0.5-\eps)+1})
                          \,\log\frac{0.5+\eps}{0.5-\eps} \,d\eps,
\end{equation}
and applying Proposition~\refp{richcosets}, the integral in the right-hand
side can be estimated as
\begin{equation}\label{e:int4}
  2n \int_0^{1/4} 2^{n(H(0.5-\eps)+H(2\eps))}
                       \,\log\frac{0.5+\eps}{0.5-\eps} \,d\eps
            < 3n \int_0^{1/4} 2^{n(H(0.5-\eps)+H(2\eps))} \, d\eps
                                                        < n\cdot 2^{\gam n}.
\end{equation}
From \refe{int1}--\!\!\refe{int4} we get $|A|<(n+2)\cdot4^{\gam n}$, and to
conclude the proof we use the tensor power trick: for integer $k\ge 1$, the
set $A\times\dotsb\times A\seq\Z_4^{kn}$ is progression-free and therefore
  $$ |A|^k < (kn+2)\cdot 4^{\gam kn} $$
by what we have just shown. This readily implies the result.
\end{proof}

\vfill

\bigskip

\end{document}